\documentclass{article}

\usepackage[a4paper,margin=1in]{geometry}
\usepackage{amsmath,amssymb,amsthm,mathtools}
\usepackage{microtype}
\usepackage{aliascnt}
\usepackage[numbers,sort]{natbib}
\usepackage[hidelinks]{hyperref}
\usepackage[nameinlink,capitalise,noabbrev]{cleveref}
\usepackage{enumitem}
\usepackage{tikz}
\usepackage[affil-it]{authblk}
\usetikzlibrary{arrows.meta,positioning,calc,decorations.pathreplacing}
\definecolor{class0}{RGB}{210,228,247}
\definecolor{class1}{RGB}{214,238,214}
\definecolor{class2}{RGB}{250,229,205}
\hypersetup{
  pdftitle={Near-Optimal Covering Sequences},
  pdfauthor={Hoang Ta}
}
\usepackage{booktabs,tabularx}

\newaliascnt{lemma}{theorem}
\newtheorem{lemma}[lemma]{Lemma}
\aliascntresetthe{lemma}
\newaliascnt{proposition}{theorem}
\newtheorem{proposition}[proposition]{Proposition}
\aliascntresetthe{proposition}
\newaliascnt{corollary}{theorem}

\aliascntresetthe{corollary}
\theoremstyle{definition}
\newaliascnt{definition}{theorem}
\newtheorem{definition}[definition]{Definition}
\aliascntresetthe{definition}
\theoremstyle{remark}
\newaliascnt{remark}{theorem}

\aliascntresetthe{remark}
\newtheorem*{theorem*}{\textbf{Main Theorem}}

\newaliascnt{example}{theorem}
\newtheorem{example}[example]{Example}
\aliascntresetthe{example}

\newcommand{\Zq}{\mathbb{Z}_q}
\newcommand{\Sigmaq}{\Sigma_q}
\newcommand{\cF}{\mathcal{F}}
\newcommand{\cP}{\mathcal{P}}
\newcommand{\cC}{\mathcal{C}}
\newcommand{\cL}{\mathcal{L}}
\newcommand{\dist}{d_{\mathrm H}}

\DeclareMathOperator{\len}{len}
\DeclareMathOperator{\lcm}{lcm}
\DeclareMathOperator{\Fix}{Fix}
\newcommand{\bigO}{\mathcal{O}}
\newcommand{\cD}{\mathcal{D}}

\begin{document}

\title{{\LARGE Near-Optimal Covering Sequences}}

\author[1]{Hoang Ta}
\author[2]{Van Khu Vu}

\affil[1]{\small Hanoi University of Science and Technology, Vietnam}
\affil[2]{\small VinUniversity, Vietnam}
\date{\today}

\maketitle

\begin{abstract}
An $(n,R)$-covering sequence over a finite alphabet $\Sigmaq \coloneqq \{0,1,\dots, q-1\}$ is a cyclic sequence whose consecutive length-$n$ windows form a covering code of radius $R$. Equivalently, every word in $\Sigmaq^n$ is within Hamming distance $R$ of at least one window. We give a deterministic and explicit construction of such sequences whose length, for every fixed alphabet size $q$, every fixed radius $R$, and every sufficiently large $n$, attains the sphere-covering lower bound up to a constant factor depending only on $q$ and $R$. Thus, in the fixed-radius regime, the construction removes the logarithmic factor in the general probabilistic upper bounds of [Chung and Cooper, \emph{Random Structures \& Algorithms}, 2004] and [Vu, \emph{Advances in Applied Mathematics}, 2005]. It also complements the earlier explicit constructions of [Chee, Etzion, Ta, and Vu, \emph{Designs, Codes and Cryptography}, 2025], which include constant factor bounds for the special binary radius-one families \(n=2^a-1\) and \(n=2^a\), where \(a\ge1\).

\end{abstract}

\section{Introduction}\label{sec:intro}

Covering codes are classical objects in coding theory and may be viewed as a dual counterpart of error-correcting codes. In a covering code, the aim is to choose a small set of codewords such that every word in the ambient Hamming space is close to at least one chosen codeword. Such codes have been extensively studied and have connections to data compression, combinatorial search, and finite geometry; see ~\cite{CohenEtAl1985,CohenLobsteinSloane1986,GrahamSloane1985,CohenEtAl1997}. A basic benchmark for this problem is the sphere-covering bound, obtained by comparing the size of the ambient space with the size of a Hamming ball.

Covering sequences impose an additional cyclic constraint on this covering problem. Instead of choosing arbitrary covering centres, one asks that the
centres arise as consecutive windows of a single cyclic sequence. Thus the problem is not only to cover the Hamming space, but also to arrange the
covering centres in a cyclic-window structure. This extra constraint is the main distinction between ordinary covering codes and covering sequences.

The case of zero covering radius recovers classical de Bruijn sequences. A de Bruijn sequence of order \(n\) over an alphabet of size \(q\) is a cyclic
sequence in which each word of length \(n\) occurs exactly once as a cyclic window. Such sequences exist for all \(q\ge2\) and \(n\ge1\) ~\cite{deBruijn1946,vanAardenneDeBruijn1951,Fredricksen1982}; see also~\cite{Etzion2024Book} for further background on de Bruijn sequences and the
de Bruijn graph. Beyond their combinatorial significance, de Bruijn sequences and related shift-register sequences also appear in information-processing applications, including quantum communication and cryptographic sequence generation ~\cite{Golomb1982,Mitchell1997,ZhangEtAl2021,CheeEtAl2022RLL,MandalGong2012,YangEtAl2017}. Covering sequences may therefore be regarded as approximate de Bruijn sequences, where exact occurrence of every length-\(n\) word is replaced by coverage within a prescribed Hamming radius.

The notion of covering sequences was introduced by Chung and Cooper~\cite{ChungCooper2004}, who obtained a probabilistic upper bound for
prime-power alphabets. This bound was subsequently extended to arbitrary finite alphabets by Vu~\cite{Vu2005}. Namely, for every fixed alphabet size \(q\)
and fixed covering radius \(R\), these results give covering sequences of
length
\[
  \bigO_{q,R}\!\left(\frac{q^n\log n}{n^R}\right),
\]
which is within a logarithmic factor of the sphere-covering lower bound. The framework of covering sequence codes, together with explicit constructions, interleaving methods, and two-dimensional analogues, was further developed in~\cite{CheeEtAl2025}. In particular, that work gives constant factor
constructions for the special binary radius-one families \(n=2^a-1\) and \(n=2^a\). The aim of the present paper is to remove the logarithmic factor in the fixed-\(q\), fixed-\(R\) regime by constructing, for every sufficiently large \(n\), covering sequences whose lengths match the sphere-covering lower bound up to a constant factor.

\subsection{Preliminaries}
\label{sec:preliminaries}
Throughout the paper, $q\ge 2$ and $R\ge 1$ denote fixed integers. We identify an alphabet of size $q$ with the additive ring $\Sigmaq=\Zq=\mathbb{Z}/q\mathbb{Z}$. A \emph{cyclic sequence} of length $\ell$ is a tuple $A=(a_t)_{t\in\mathbb{Z}_{\ell}}$ with $a_t\in\Sigmaq$, whose coordinates are indexed by the cyclic group $\mathbb{Z}_{\ell}$; accordingly, all indices are read modulo $\ell$. We write $\len(A)=\ell$ for its length. For an integer phase $j$ and a positive integer $u$, the \emph{cyclic window} of length $u$ starting at phase $j$ is defined by
\begin{equation}\label{eq:window}
  A[j,u]:=(a_j,a_{j+1},\ldots,a_{j+u-1})\in\Sigmaq^{u},
\end{equation}
where every subscript is reduced modulo $\ell$. For a single coordinate we abbreviate $A[j]:=a_j$. We stress that the window length $u$ may exceed
$\ell$, in which case the sequence is simply read periodically. This convention is used repeatedly in what follows and is indispensable when a sequence is shorter than the span it is required to cover. For two words $x,y$ of equal length, $\dist(x,y)$ denotes their \emph{Hamming distance},
that is, the number of coordinates in which they differ. The asymptotic notation refers to the regime in which the main length
parameter tends to infinity. Constants implicit in \(\bigO_{q,R}(\cdot)\) and \(\Theta_{q,R}(\cdot)\), and the rates of convergence in \(o_{q,R}(1)\), may depend only on the displayed subscripted parameters. The analogous convention applies to subscripts \(q\) and \(R\).

\begin{definition}\label{def:cs}
A cyclic sequence $S$ over $\Sigmaq$ is called a \emph{$q$-ary $(n,R)$-covering sequence}, or equivalently a \emph{de Bruijn covering sequence of span $n$ and radius $R$}, if for every target word $y\in\Sigmaq^n$ there exists a phase $j$ such that $ \dist\bigl(y,S[j,n]\bigr)\le R.$ We write $\cL_q(n,R)$ for the smallest length of such a cyclic sequence.
\end{definition}

\begin{definition}\label{def:csc}
An indexed finite family $\cF$ of cyclic sequences over $\Sigmaq$ is called a \emph{$q$-ary $(n,R)$-covering sequence code}, abbreviated $(n,R)$-CSC, if
every target word $y\in\Sigmaq^n$ lies within Hamming distance $R$ of some $n$-window of at least one member of $\cF$. The cost of such a family is
measured by two parameters,
\[
  Q(\cF):=\sum_{A\in\cF}\len(A)
  \qquad\text{and}\qquad
  D(\cF):=|\cF|,
\]
called its \emph{total cyclic length} and its \emph{number of components}, respectively.
\end{definition}

\begin{example}
    Let \(q=2\), \(n=3\), and \(R=1\). Consider the indexed family
\[
  \cF=(A_0,A_1),
  \qquad
  A_0=(0,0,1),\quad A_1=(0,1,1),
\]
where both sequences are read cyclically. The length-\(3\) windows of \(A_0\)
are
\[
  001,\quad 010,\quad 100,
\]
and the length-\(3\) windows of \(A_1\) are
\[
  011,\quad 110,\quad 101.
\]
Thus the windows of the two components are precisely the binary words of length \(3\) having Hamming weight \(1\) or \(2\). Every binary word of length \(3\) is within Hamming distance at most \(1\) from one of these windows: the words \(001,010,100,011,101,110\) are covered exactly, while \(000\) is at distance \(1\) from any word of weight \(1\), and \(111\) is at distance \(1\) from any word of weight \(2\). Hence \(\cF\) is a binary \((3,1)\)-CSC. Its total cyclic length and number of components are
\[
  Q(\cF)=\len(A_0)+\len(A_1)=3+3=6,
  \qquad
  D(\cF)=2.
\]
\end{example}

For $x\in\Sigmaq^n$, let $B_R(x):=\{\,y\in\Sigmaq^n:\dist(x,y)\le R\,\}$ denote the Hamming ball of radius $R$ centred at $x$. Its cardinality is
independent of $x$ and equals
\[
  V_q(n,R):=|B_R(x)|=\sum_{j=0}^{R}\binom{n}{j}(q-1)^j .
\]
Let $K_q(n,R)$ denote the minimum size of a $q$-ary covering code of length $n$ and radius $R$. Since the Hamming balls of radius $R$ centred at the
codewords are required to cover the entire space $\Sigmaq^n$, the standard sphere-covering argument yields
\[
  K_q(n,R)\ge \frac{q^n}{V_q(n,R)} .
\]
The same lower bound carries over to covering sequences, the length-$n$ windows of an $(n,R)$-covering sequence constitute an $(n,R)$-covering code, and
consequently
\[
  \cL_q(n,R)\ge K_q(n,R)\ge \frac{q^n}{V_q(n,R)} .
\]
For fixed $q$ and $R$,
\begin{equation}\label{eq:ball-asymptotic}
  V_q(n,R)=\frac{(q-1)^R}{R!}\,n^R+\bigO_{q,R}\!\left(n^{R-1}\right),
\end{equation}
so that this lower bound is of order $q^n/n^R$.

For ordinary covering codes, the sphere-covering lower bound is known to be tight up to a constant factor for every fixed \(q\) and \(R\): there exists a
constant \(c_{q,R}\), independent of \(n\), such that
\[
  K_q(n,R)\le c_{q,R}\,\frac{q^n}{V_q(n,R)} \, ,
\]
for all sufficiently large \(n\); see~\cite{CohenEtAl1985,CohenEtAl1997,KrivelevichSudakovVu2003}.

In terms of the Hamming-ball volume defined above, the probabilistic bounds of Chung and Cooper~\cite{ChungCooper2004} and Vu~\cite{Vu2005} can be written as
\[
  \cL_q(n,R)
  \le
  \bigO_{q,R}\!\left(
    \frac{q^n}{V_q(n,R)}\log n
  \right).
\]
The main result below removes the factor \(\log n\) for every fixed \(q\) and \(R\).


We also use the following two-dimensional version. Let $m,n,M,N$ be positive integers. A doubly periodic $q$-ary array of period $M\times N$ is a tuple
\[
  A=(a_{u,v})_{(u,v)\in\mathbb Z_M\times\mathbb Z_N},
  \qquad a_{u,v}\in\Sigmaq,
\]
and its area is $MN$. For a phase $(i,j)\in\mathbb Z_M\times\mathbb Z_N$, the $m\times n$ window of $A$ at $(i,j)$ is
\[
A[(i,j);m,n]
:=
(a_{i+r,j+s})_{\substack{0\le r<m \\ 0\le s<n}}
\in\Sigmaq^{m\times n},
\]
where the two indices are reduced modulo $M$ and $N$, respectively. When measuring Hamming distance, we identify an $m\times n$ array with a word of length $mn$. The array $A$ is called a \emph{$q$-ary $(m\times n,R)$-covering two-dimensional sequence}, or simply an \emph{$(m\times n,R)$-covering 2D sequence}, if every $Y\in\Sigmaq^{m\times n}$ lies within Hamming distance $R$ of some window $A[(i,j);m,n]$. We then set
\[
\cL_q^{(2)}(m,n,R)
:= \min\bigl\{\,MN:\ \text{some $(m\times n,R)$-covering 2D sequence has period
  $M\times N$}\,\bigr\}.
\]
Since an $M\times N$ array has $MN$ window phases, the sphere-covering argument
yields
\[
\cL_q^{(2)}(m,n,R)
\ge
\frac{q^{mn}}{V_q(mn,R)}.
\]
Two-dimensional covering sequences and their connection to the one-dimensional case were studied in~\cite{CheeEtAl2025}, where folding constructions were used to convert one-dimensional covering sequences into two-dimensional covering sequences with only a constant factor loss in area.

\subsection{Main results and techniques.}

In this paper, we give a deterministic and explicit construction of \(q\)-ary covering sequences whose length, for every sufficiently
large span \(n\), lies within a constant factor of the sphere-covering bound, where the constant depends only on \(q\) and \(R\). The construction proceeds in two stages. We first build a radius-one covering sequence code: the feedback coefficients of a linear-feedback shift register are read off a de Bruijn sequence, which forces every syndrome to occur as a column of the parity-check matrix and thereby guarantees covering radius one. Since the leading coefficient is a unit, the register is reversible, so its state space splits into disjoint cycles, and a short-cycle estimate shows that almost all states lie on long cycles; hence the resulting family has few components. We then raise the radius from one to $R$ by interleaving $R$ such families along every diagonal phase orbit, which removes any coprimality hypothesis on the component lengths, and a standard linearisation step finally merges the family into a single cyclic covering sequence. These ideas yield the following bound.

\begin{theorem*}
For fixed integers $q\ge 2$ and $R\ge 1$, as $n\to\infty$, the minimum length of a $q$-ary $(n,R)$-covering sequence of span $n$ and radius $R$ satisfies
\[
  \cL_q(n,R)
  \le
  \left(2q^R R^{R+1}+o_{q,R}(1)\right)\frac{q^n}{n^R}
  =
  \left(
    \frac{2q^R(q-1)^R R^{R+1}}{R!}
    +o_{q,R}(1)
  \right)
  \frac{q^n}{V_q(n,R)} .
\]
\end{theorem*}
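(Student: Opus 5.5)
The plan is to follow the three-stage strategy announced just before the statement: build a radius-one CSC from a linear feedback shift register, raise the radius to $R$ by interleaving, and then linearise the resulting family into a single cyclic sequence. Throughout, I would track the constant $2q^R R^{R+1}$.

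\textbf{Stage 1 (radius one).} Fix a span $m$ and let $k$ be the smallest integer with $q^k\ge$ (roughly) $m$, so that $q^k\le qm$. Take a $q$-ary de Bruijn sequence of order $k$ and read $m$ consecutive symbols of it as the feedback coefficients $c_0,\dots,c_{m-1}$ of a register of length $m+k$. Arrange that the leading coefficient is a unit.

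The columns of the $k\times(m+k)$ parity-check matrix $H$ (a sliding-window Toeplitz matrix) then contain every vector of $\Sigmaq^k$, since the $k$-windows of a de Bruijn sequence are exhaustive. Hence every syndrome $Hy$ equals a column, possibly scaled; when $q$ is not prime, I would insist that every nonzero syndrome literally appears as a column, which the de Bruijn property gives. So every $y$ is within distance one of the kernel. The kernel is exactly the set of state sequences of the register, i.e.\ of the $(m+k)$-windows of its cycles.

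Because the leading coefficient is a unit, the state map is a bijection on $\Sigmaq^{m+k}$. Its cycles therefore partition the kernel, of size $q^{m}$, into a CSC with $Q=q^m$. Since the span is $n=m+k$, this gives $Q\approx q^{n}\cdot q^{-k}\le q\,q^n/n\cdot(1+o(1))$.

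For the component count I would use a short-cycle estimate. The number of states on cycles of length $\le L$ is at most $\sum_{\ell\le L}\#\Fix(T^\ell)$, and each fixed-point set of $T^\ell$ is a subspace (a subgroup when $q$ is not prime). I would bound it by $q^{\ell}$ times a factor coming from the initial-condition freedom. Choosing $L$ of order $n/2$ shows that $o(q^n/n^R)$ states lie on short cycles. This gives $D(\cF)=o(q^n/n^{R+1})$, so the number of components is negligible. I expect this short-cycle bound to be the main obstacle: with coefficients forced by a de Bruijn sequence, $\Fix(T^\ell)$ must be controlled uniformly, and for composite $q$ this must be done over $\Zq$ via the CRT together with Hensel-type lifting. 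If the direct bound fails, my fallback is to discard the short cycles and cover their neighbourhoods separately at negligible cost.

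\textbf{Stage 2 (radius $R$).} Split the span as $n=n_1+\dots+n_R$ with $n_i\approx n/R$, and apply Stage 1 to each part, getting families $\cF_i$ with $Q(\cF_i)\approx q\cdot q^{n_i}R/n$. Interleave by blocks: for each choice of components $A_1,\dots,A_R$, form sequences whose $n$-windows read $A_1$ on its coordinates, then $A_2$, and so on. Covering the orbit of diagonal phases $(j,j,\dots,j)$ modulo $(\len A_1,\dots,\len A_R)$ requires $\prod\len A_i$ total length across the orbits. This removes any coprimality need, at the cost of at most a factor $2$ from rounding block lengths and padding. Each target $y$ splits into $R$ pieces, each within distance one of a window, so $y$ is within distance $R$. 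The total length is then
\[
\prod_i Q(\cF_i)\approx q^R R^R\,\frac{q^n}{n^R},
\]
and the number of components is at most $\prod_i D(\cF_i)$ times the number of orbits, which remains $o(q^n/n^R)$.

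\textbf{Stage 3 (linearisation).} Concatenate all components, each cut open and extended by its first $n-1$ symbols. This adds $(n-1)D$ symbols, which is $o(q^n/n^R)$ given the component bound. The remaining factor $R$ and the factor $2$ absorb the overheads from interleaving and from rounding $k$. Together these yield the constant $2q^RR^{R+1}$.
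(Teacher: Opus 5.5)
The overall route is the paper's: de Bruijn feedback coefficients, a reversible register, a short-cycle estimate, interleaving over all diagonal phase orbits, then linearisation. The gap is in the bookkeeping for the constant $2q^RR^{R+1}$. You assert that the number of components is negligible, so that the linearisation overhead $(n-1)D$ is of lower order. Nothing supports this. A short-cycle estimate only bounds the states on cycles shorter than a threshold $h$. The cycles above the threshold can only be counted by (number of states)$/h$, and $h$ cannot usefully exceed the span: the bound $|\Fix(T^\ell)|\le q^\ell$ allows essentially all states to lie on cycles of length comparable to the register length. The best available is $D(\cF_m)\le(1+o_q(1))Q_m/m$ with $h=\lfloor m-\sqrt m\rfloor$. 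After interleaving, $D(\cP_n)=\sum_{\boldsymbol A}\prod_i\ell_i/\lcm(\ell_0,\ldots,\ell_{R-1})\le P/h+\bigl(q^h/(q-1)\bigr)^R$, using $\lcm\ge\max_i\ell_i\ge h$ once one component is long. So $D(\cP_n)\le(1+o_{q,R}(1))P/m$, and $(n-1)D(\cP_n)\approx RP$, as large as the main term $Q(\cP_n)=RP$. The linearisation cost is exactly where the factor $2$ comes from. Even your stated bound $D=o(q^n/n^R)$ would not make $(n-1)D$ negligible; you would need $o(q^n/n^{R+1})$. Also, a threshold at about half the span gives $D\lesssim 2P/m$ and only the constant $3q^RR^{R+1}$. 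To reach $2$ you need $h=m-o(m)$ with $(q^h/(q-1))^R=o(P/m)$, e.g.\ $h=\lfloor m-\sqrt m\rfloor$.

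Correspondingly, the factors $R$ and $2$ are not ``overheads'' to be absorbed. The interleaving must be round-robin, $W[Rt+i]=A_i[\alpha_i+t]$. Only then does a shift by $R$ advance every component phase by one, which is what makes diagonal orbits meaningful; block-wise placement has no such shift structure. Each orbit then gives a sequence of length $R\,\lcm(\ell_0,\ldots,\ell_{R-1})$, so $Q(\cP_n)=R\prod_iQ_i$ exactly. There is no extra factor $2$ from rounding or padding; the $2$ enters only at concatenation, as above.

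Two smaller points. First, the number $r$ of parity checks must satisfy $q^r+r-1\le k$, the length of the coefficient block, so that the block contains a full de Bruijn period and every syndrome appears as a column. Your choice ``smallest $k$ with $q^k\ge m$'' goes the wrong way, since a block of $m$ symbols has fewer than $q^k$ windows of length $k$. It is the maximality of $r$, forcing $q^{r+1}$ to exceed roughly the span, that yields the factor $q$. Second, the short-cycle bound you flag as the main obstacle is elementary. A state fixed by $T^\ell$ has $\ell$-periodic output and equals its first $k$ outputs, so it is determined by its first $\ell$ outputs. Hence $|\Fix(T^\ell)|\le q^\ell$ over $\Zq$ for every $q$, with no CRT or Hensel lifting.
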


Together with the sphere-covering lower bound $ \cL_q(n,R)\ge \frac{q^n}{V_q(n,R)},$ the main result determines the minimum length of a $q$-ary $(n,R)$-covering sequence up to a constant factor depending only on $q$ and $R$. In particular, for fixed $q$ and $R$,
\[
  \cL_q(n,R)
  =\Theta_{q,R}\!\left(\frac{q^n}{V_q(n,R)}\right)
  =\Theta_{q,R}\!\left(\frac{q^n}{n^R}\right),
\]
where the second equality uses $V_q(n,R)=\Theta_{q,R}(n^R)$. The upper bound is obtained through an explicit deterministic construction and holds for every sufficiently large span $n$. The result also admits a two-dimensional consequence, applying the folding construction of~\cite{CheeEtAl2025} to the covering sequences constructed here yields, for fixed $q$ and $R$,
\[
  \cL_q^{(2)}(m,n,R)
  \le
  \bigO_{q,R}\!\left(\frac{q^{mn}}{V_q(mn,R)}\right)
  =
  \bigO_{q,R}\!\left(\frac{q^{mn}}{(mn)^R}\right).
\]
This removes the logarithmic factor in the previous general probabilistic upper bound for two-dimensional covering sequences, namely
\[
  \cL_q^{(2)}(m,n,R)
  \le
  \bigO_{q,R}\!\left(
    \frac{q^{mn}}{V_q(mn,R)}\bigl(\log m+\log n\bigr)
  \right),
\]
obtained in~\cite{CheeEtAl2025}. Table~\ref{tab:known-upper-bounds} summarizes the relevant upper bounds.

\begin{table}[t]
\centering
\footnotesize
\setlength{\tabcolsep}{5pt}
\renewcommand{\arraystretch}{1.2}
\begin{tabularx}{\textwidth}{
  @{}
  |c
  |>{\raggedright\arraybackslash}p{2.7cm}
  |c
  |c
  |c
  |>{\raggedright\arraybackslash}X|
  @{}
}
\hline
Dimension
& Reference
& Alphabet
& Radius
& Admissible sizes
& \multicolumn{1}{c|}{Upper bound}
\\
\hline
1D
& Chung--Cooper \cite{ChungCooper2004};
  Vu \cite{Vu2005}
& \(q\ge 2\)
& fixed \(R\)
& all \(n\)
&
\(\displaystyle
  \bigO_{q,R}\!\left(
    \frac{q^n}{V_q(n,R)}\log n
  \right)\)
\\
\hline
1D
& Chee--Etzion--Ta--Vu \cite{CheeEtAl2025}
& \(q=2\)
& \(R=1\)
& \(n=2^a-1 \text{ or }  2^a\)
&
\(\displaystyle
  \bigO\!\left(\frac{2^n}{n}\right)\)
\\
\hline
\textbf{1D}
& \textbf{This paper}
& \(\boldsymbol{q\ge 2}\)
& \textbf{fixed} \(\boldsymbol{R}\)
& \textbf{all} \(\boldsymbol{n}\)
&
\(\displaystyle
  \boldsymbol{\bigO_{q,R}\!\left(
    \frac{q^n}{V_q(n,R)}
  \right)}\)
\\
\hline\hline
2D
& Chee--Etzion--Ta--Vu \cite{CheeEtAl2025}
& \(q\ge 2\)
& fixed \(R\)
& all \(m,n\)
&
\(\displaystyle
  \bigO_{q,R}\!\left(
    \frac{q^{mn}}{V_q(mn,R)}
    (\log m+\log n)
  \right)\)
\\
\hline
\textbf{2D}
& \textbf{This paper}
& \(\boldsymbol{q\ge 2}\)
& \textbf{fixed} \(\boldsymbol{R}\)
& \textbf{all} \(\boldsymbol{m,n}\)
&
\(\displaystyle
  \boldsymbol{\bigO_{q,R}\!\left(
    \frac{q^{mn}}{V_q(mn,R)}
  \right)}\)
\\
\hline
\end{tabularx}
\caption{Known upper bounds for covering sequences. A bound is called constant factor optimal here if it matches the sphere-covering lower bound \(q^n/V_q(n,R)\) in 1D, or \(q^{mn}/V_q(mn,R)\) in 2D, up to a multiplicative constant depending only on \(q\) and \(R\). Unless specific lengths are listed, bounds hold for all sufficiently large \(n\) (1D) and \(mn\) (2D).}
\label{tab:known-upper-bounds}
\end{table}

\section{The radius-one construction}\label{sec:radius1}

Throughout this section, let \(m\ge q+1\) be an integer. We first construct a radius-one covering sequence code over \(\Zq\) with few components. The
parameters of the construction will be fixed below. The construction combines a coefficient block copied from a de Bruijn sequence, a radius-one additive code over \(\Zq\), and a reversible feedback shift register. A final estimate on short state cycles will control the concatenation cost in \cref{sec:radiusR}.

We use the following classical existence lemma for de Bruijn sequences; see~\cite{deBruijn1946,vanAardenneDeBruijn1951,Fredricksen1982,Etzion2024Book} for background and constructions.

\begin{lemma}\label{lem:debruijn}
For every $q\ge 2$ and $r\ge 1$, there exists a cyclic sequence $B=(b_t)_{t\in\mathbb{Z}_{q^r}}$ of length $q^r$ such that the map
\[
  t\longmapsto (b_t,b_{t+1},\ldots,b_{t+r-1})
\]
is a bijection from $\mathbb{Z}_{q^r}$ onto $\Zq^r$. Equivalently, every $r$-tuple over $\Zq$ occurs exactly once as a cyclic window of $B$.
\end{lemma}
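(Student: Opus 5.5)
The plan is to prove \cref{lem:debruijn} by the standard route through the de Bruijn graph and an Eulerian circuit.

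If \(r=1\), take \(B=(0,1,\dots,q-1)\). For \(r\ge2\), let \(G\) be the directed multigraph whose vertex set is \(\Zq^{r-1}\). For each \(r\)-tuple \((x_1,\dots,x_r)\in\Zq^r\) we put one edge from \((x_1,\dots,x_{r-1})\) to \((x_2,\dots,x_r)\). Thus the edges of \(G\) are in bijection with \(\Zq^r\), and \(G\) has exactly \(q^r\) edges. Loops such as \(0^{r-1}\to0^{r-1}\) are allowed.

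First I check the two Euler conditions.

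\emph{Balance.} Every vertex has in-degree \(q\), one for each choice of the prepended symbol \(x_1\). It has out-degree \(q\), one for each choice of the appended symbol \(x_r\).

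\emph{Strong connectivity.} From any vertex \(u\) to any vertex \(v\), shift in the \(r-1\) symbols of \(v\) one at a time. This gives a directed walk of length \(r-1\) from \(u\) to \(v\).

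Euler's theorem for directed graphs then gives a closed walk \(e_0,e_1,\dots,e_{q^r-1}\) that uses every edge exactly once.

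Next I read off \(B\). Let \(b_t\) be the first coordinate \(x_1\) of the \(r\)-tuple labelling \(e_t\), for \(t\in\mathbb{Z}_{q^r}\).

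The key point is that consecutive edges overlap: the head of \(e_t\) is the tail of \(e_{t+1}\). So if \(e_t\) is labelled \((x_1,\dots,x_r)\), then \(e_{t+1}\) is labelled \((x_2,\dots,x_r,\ast)\).

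By induction on \(i\), the \(i\)-th coordinate of the label of \(e_t\) equals the first coordinate of the label of \(e_{t+i-1}\), with indices taken modulo \(q^r\) because the walk is closed. Hence the label of \(e_t\) is exactly \((b_t,\dots,b_{t+r-1})\).

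The map \(t\mapsto(b_t,\dots,b_{t+r-1})\) therefore agrees with \(t\mapsto\) (label of \(e_t\)). This is a bijection \(\mathbb{Z}_{q^r}\to\Zq^r\), since each edge is used exactly once and the edges correspond bijectively to \(r\)-tuples.

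The only delicate step is the cyclic wrap-around in this identification. It is handled by the closedness of the Euler circuit: the tail of \(e_0\) equals the head of \(e_{q^r-1}\), so the overlap relation also holds across the index \(q^r-1\to0\).
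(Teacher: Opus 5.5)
Your proof is correct. The paper gives no argument of its own for \cref{lem:debruijn}: it cites the classical literature \cite{deBruijn1946,vanAardenneDeBruijn1951,Fredricksen1982,Etzion2024Book}, and your Eulerian-circuit argument on the de Bruijn graph with vertex set $\Zq^{r-1}$ is the standard proof found there, here made self-contained. It also suits the paper's needs, since it works for every $q\ge2$ (composite included) and is constructive via Hierholzer's algorithm in time linear in $q^r$, which is at most $m$ in the paper's application; the separate case $r=1$ is harmless but unnecessary, because $\Zq^{0}$ is a single vertex carrying $q$ loops and the same argument applies.
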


We use this sequence to fix the parameters of the construction. Set
\begin{equation}\label{eq:def-r-k}
  r=r_q(m):=\max\bigl\{\,j\ge 1:\ q^j+2j-1\le m\,\bigr\},
  \qquad k:=m-r.
\end{equation}

The choice of \(r\) means that the coefficient block of length \(q^r+r-1\) used below fits inside the register of length \(k=m-r\). In particular,
\(r=\bigO_q(\log m)\), and hence \(k=m-r\) differs from \(m\) only by a logarithmic term. By the maximality of \(r\) in \cref{eq:def-r-k}, one has
\begin{equation}\label{eq:r-inequalities}
  q^r+2r-1\le m
  \qquad\text{and}\qquad
  q^{r+1}+2r+1>m.
\end{equation}
The first inequality ensures that the coefficient sequence used below fits inside the register. Indeed,
\begin{equation}\label{eq:coefficient-fits}
  q^r+r-2\le k-1,
\end{equation}
because of $ k-1=m-r-1\ge (q^r+2r-1)-r-1=q^r+r-2.$

Fix a de Bruijn sequence \(B=(b_t)_{t\in\mathbb Z_{q^r}}\) whose cyclic windows of length \(r\) contain every element of \(\Zq^r\) exactly once. Since every \(r\)-tuple occurs, the symbol \(1\) occurs somewhere in \(B\). After a cyclic shift, we may therefore assume that \(b_0=1\). We extend the indexing periodically by setting \(b_{t+q^r}=b_t\), and define coefficients \(a_0,\ldots,a_{k-1}\in\Zq\) by
\begin{equation}\label{eq:coefficients}
  a_j:=
  \begin{cases}
    b_j, & 0\le j\le q^r+r-2,\\[2pt]
    0,   & q^r+r-1\le j\le k-1.
  \end{cases}
\end{equation}
The bound \cref{eq:coefficient-fits} guarantees that this definition is valid: the copied part from \(B\) ends no later than position \(k-1\), and
any remaining positions are filled with zeros. In particular, \(a_0=b_0=1\). We now use these coefficients as parity-check coefficients. Define the
additive code as follows.
\begin{equation}\label{eq:def-Cm}
  \cC_m:=
  \left\{
    x=(x_0,\ldots,x_{m-1})\in\Zq^m:\
    x_{t+k}+\sum_{j=0}^{k-1}a_jx_{t+j}=0
    \ \text{for }0\le t<r
  \right\}.
\end{equation}
Thus \(\cC_m\) is the \(\Zq\)-submodule of \(\Zq^m\) consisting of all words satisfying these \(r\) consecutive parity checks, with all arithmetic
performed in the ring \(\Zq\). We next determine its cardinality and covering radius. The key point for the covering bound is that the coefficient block copied from the de Bruijn sequence contains, as consecutive length-\(r\) windows, all vectors of \(\mathbb Z_q^r\). This makes every syndrome appear as a column of the parity-check matrix.


\begin{proposition}\label{prop:radius-one-code}
The code $\cC_m$ has cardinality $q^k$ and covering radius at most 1 in
$\Zq^m$.
\end{proposition}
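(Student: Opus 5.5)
The plan is to view $\cC_m$ as the kernel of the $\Zq$-linear syndrome map
\[
  s:\Zq^m\to\Zq^r,\qquad s(x)_t:=x_{t+k}+\sum_{j=0}^{k-1}a_jx_{t+j}\quad(0\le t<r),
\]
and to prove both claims from two properties of $s$: it is surjective, and every syndrome is (up to sign) a single column of its matrix. Note that $t+k\le r-1+k=m-1$, so all indices lie in $\{0,\dots,m-1\}$.

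\emph{Cardinality.} The $r$ equations are triangular in the last $r$ coordinates. For each $t$, the $t$-th check expresses $x_{t+k}$ in terms of $x_t,\dots,x_{t+k-1}$. Hence $x_0,\dots,x_{k-1}$ can be chosen freely, and $x_k,\dots,x_{m-1}$ are then determined recursively for $t=0,1,\dots,r-1$. This is exactly the shift-register recursion, and it needs no division, so it works over the ring $\Zq$ even when $q$ is not prime. It gives a bijection $\Zq^k\to\cC_m$, so $|\cC_m|=q^k$. The same triangularity shows that $s$ is surjective: prescribe any target values $s(x)_t$ and solve successively for $x_{t+k}$.

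\emph{Covering radius.} For $y\in\Zq^m$, set $\sigma:=s(y)\in\Zq^r$. If $\sigma=0$ then $y\in\cC_m$. Otherwise it suffices to find a position $p$ and a scalar $c\in\Zq$ with $s(c\,e_p)=\sigma$, since then $y-c\,e_p\in\cC_m$ lies at distance at most $1$ from $y$. Take $p\in\{0,\dots,k-1\}$. Coordinate $x_p$ enters the $t$-th check with coefficient $a_{p-t}$ when $0\le p-t\le k-1$, and with coefficient $0$ otherwise. So column $p$ of the parity-check matrix is $(a_p,a_{p-1},\dots,a_{p-r+1})$, where $a_j:=0$ for $j<0$. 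For $p=i+r-1$ with $0\le i\le q^r-1$, this column is the reversed window $(b_{i+r-1},\dots,b_i)$. By \cref{eq:coefficients} and \cref{eq:coefficient-fits}, all indices lie in $[0,q^r+r-2]\subseteq[0,k-1]$, so these entries really are $b$'s, and $p\le k-1$.

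By \cref{lem:debruijn}, as $i$ ranges over $\mathbb Z_{q^r}$, the windows $(b_i,\dots,b_{i+r-1})$ run over all of $\Zq^r$. Reversal is a bijection of $\Zq^r$, so the reversed windows also cover all of $\Zq^r$. Hence every $\sigma$ equals some column, and we may take $c=1$. The periodic extension $b_{t+q^r}=b_t$ is what makes the cyclic windows that wrap around correspond to genuine consecutive blocks $a_i,\dots,a_{i+r-1}$ with $i+r-1\le q^r+r-2$.

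The main obstacle is bookkeeping rather than anything deep. One must track the column indexing carefully, which reverses the order of entries, and verify through \cref{eq:coefficient-fits} that the whole copied block, including the wrap-around windows, lies inside positions $0,\dots,k-1$.
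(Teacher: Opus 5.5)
Your proof is correct and matches the paper's argument essentially line for line. You solve the $r$ triangular checks to get $|\cC_m|=q^k$, then identify columns $r-1,\dots,q^r+r-2$ of the parity-check matrix with reversed de Bruijn windows, so that $y-e_p\in\cC_m$ for a column $h_p$ equal to the syndrome (your separate surjectivity remark and the scalar $c$ are harmless but unnecessary, since the column argument with $c=1$ already suffices).
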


\begin{proof}
\emph{Cardinality.} Fix arbitrary symbols $x_0,\ldots,x_{k-1}\in\Zq$ for the first $k$ coordinates. The defining constraint \cref{eq:def-Cm} at $t=0$
expresses $x_k$ uniquely in terms of these symbols; the constraint at $t=1$ then determines $x_{k+1}$, and proceeding inductively through $t=r-1$ fixes
the remaining coordinate $x_{m-1}$. Consequently, each of the $q^k$ admissible choices for the first $k$ coordinates extends to exactly one codeword, whence $|\cC_m|=q^k$.

\emph{Covering radius.} Let $H$ be the $r\times m$ parity-check matrix over $\Zq$ associated with \cref{eq:def-Cm}, so that $\cC_m=\ker H$. Indexing rows
by $t=0,\ldots,r-1$ and columns by $i=0,\ldots,m-1$, the entries of $H$ are given by
\begin{equation}\label{eq:Hentries}
  H_{t,i}=
  \begin{cases}
    a_{i-t}, & 0\le i-t<k,\\
    1,       & i=t+k,\\
    0,       & \text{otherwise},
  \end{cases}
\end{equation}
where the two nonzero cases are mutually exclusive. Denote by $h_i\in\Zq^r$ the $i$-th column of $H$.

Consider first a column index $i$ satisfying $r-1\le i\le k-1$. As $t$ ranges over $\{0,\ldots,r-1\}$, the offset $i-t$ ranges over $\{i-r+1,\ldots,i\}\subseteq\{0,\ldots,k-1\}$; hence every entry of the column falls into the first case of \cref{eq:Hentries}, while the symbol $1$ never
occurs since $i\le k-1<t+k$. The column therefore takes the explicit form
\begin{equation}\label{eq:internal-column}
  h_i=(a_i,a_{i-1},\ldots,a_{i-r+1})^{\mathsf T}.
\end{equation}
Specialising to $i=r-1+t$ with $0\le t<q^r$, \cref{eq:coefficient-fits} guarantees that these indices lie in $[r-1,k-1]$, and every coefficient
appearing in \cref{eq:internal-column} then has index at most $q^r+r-2$ and thus coincides with the corresponding symbol of $B$. We conclude that
\begin{equation}\label{eq:columns-debruijn}
  h_{r-1+t}=(b_{t+r-1},b_{t+r-2},\ldots,b_t)^{\mathsf T},
\end{equation}
which is precisely the coordinate reversal of the length-\(r\) cyclic window \((b_t,\ldots,b_{t+r-1})\) of the de Bruijn sequence \(B\). By \cref{lem:debruijn}, this window traverses every
element of $\Zq^r$ exactly once as $t$ runs through $\{0,\ldots,q^r-1\}$; since coordinate reversal is a bijection of $\Zq^r$, it follows that
\begin{equation}\label{eq:all-syndromes-columns}
  \bigl\{\,h_{r-1+t}:0\le t<q^r\,\bigr\}=\Zq^r.
\end{equation}
In other words, \emph{every syndrome in $\Zq^r$ is realised as a column of $H$}.

Now let $y\in\Zq^m$ be arbitrary and set $\sigma:=Hy$. If $\sigma=0$, then $y\in\cC_m$ already. Otherwise $\sigma\ne 0$, and \cref{eq:all-syndromes-columns} furnishes an index $i$ with $h_i=\sigma$. Writing $e_i$ for the $i$-th standard basis vector of $\Zq^m$, put $y':=y-e_i$. Since $1\ne0$ in $\Zq$, the vectors $y$ and $y'$ differ in the single coordinate $i$, and
\[
  Hy'=Hy-He_i=\sigma-h_i=0,
\]
so that $y'\in\cC_m$ with $\dist(y,y')=1$. Every word of $\Zq^m$ thus lies within Hamming distance $1$ of $\cC_m$, and so $\cC_m$ has covering radius at
most $1$, completing the proof.
\end{proof}

The proof uses only addition in \(\Zq\) and the fact that \(1\) is a unit; it never divides by an arbitrary nonzero symbol. Hence
\cref{prop:radius-one-code} holds for every \(q\ge2\), composite or not. We now realise \(\cC_m\) dynamically. Consider the state-transition map
\begin{equation}\label{eq:transition}
  T:\Zq^k\longrightarrow\Zq^k,\qquad
  T(u_0,\ldots,u_{k-1}):=
  \left(u_1,\ldots,u_{k-1},\ -\sum_{j=0}^{k-1}a_j u_j\right).
\end{equation}
This is the standard linear-feedback shift register with coefficients \(a_0,\ldots,a_{k-1}\); see~\cite{Golomb1982,Fredricksen1982}. It discards the leading symbol, shifts the rest, and appends the feedback value \(-\sum_j a_j u_j\); see \cref{fig:lfsr}.

\begin{lemma}\label{lem:T-permutation}
The map $T$ is a permutation of the state space $\Zq^k$.
\end{lemma}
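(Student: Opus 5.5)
Since \(\Zq^k\) is finite, it suffices to show that \(T\) is injective, or more directly to write down an explicit two-sided inverse. The only fact about the coefficients that we need is \(a_0=b_0=1\), which was arranged by the cyclic shift of \(B\) before \cref{eq:coefficients}. This makes the leading coefficient a unit in \(\Zq\). It holds for every modulus \(q\), prime or composite, since we never need to invert any other element.

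The plan is to recover the discarded symbol \(u_0\) from the image. Let \(v=T(u)=(v_0,\ldots,v_{k-1})\). By \cref{eq:transition}, \(v_i=u_{i+1}\) for \(0\le i\le k-2\), and
\[
v_{k-1}=-\sum_{j=0}^{k-1}a_ju_j=-u_0-\sum_{j=1}^{k-1}a_jv_{j-1}.
\]
Solving gives \(u_0=-v_{k-1}-\sum_{j=1}^{k-1}a_jv_{j-1}\). We therefore define
\[
T^{-1}(v_0,\ldots,v_{k-1}):=\Bigl(-v_{k-1}-\sum_{j=1}^{k-1}a_jv_{j-1},\,v_0,\ldots,v_{k-2}\Bigr).
\]
Then we check that \(T^{-1}\circ T=\mathrm{id}\) on \(\Zq^k\). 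Each check is a one-line substitution that uses only additive arithmetic in \(\Zq\) and \(a_0=1\). This proves that \(T\) is a bijection, and injectivity alone already suffices by finiteness.

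There is no real obstacle. The one point to state explicitly is that the argument uses only that \(a_0\) is a unit, here literally \(1\), so no field structure is needed. We can also note that \(T\) is \(\Zq\)-linear with companion-type matrix of determinant \(\pm a_0=\pm1\), which gives an alternative proof.
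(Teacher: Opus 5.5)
Your proof is correct and follows essentially the same route as the paper: you read $u_1,\dots,u_{k-1}$ off the shifted coordinates, use $a_0=1$ to solve for $u_0$, and arrive at exactly the paper's explicit inverse formula. Your extra remarks, namely that finiteness reduces bijectivity to injectivity and that the companion matrix has determinant $\pm a_0=\pm1$, are also valid, though the proof does not need them.
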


\begin{proof}
We establish bijectivity by exhibiting the inverse map explicitly. Let \(v=(v_0,\ldots,v_{k-1})=T(u)\), where \(u=(u_0,\ldots,u_{k-1})\). By the
definition of \(T\), the first \(k-1\) coordinates of \(v\) reproduce the trailing coordinates of \(u\):
\[
  v_0=u_1,\quad v_1=u_2,\quad \ldots,\quad v_{k-2}=u_{k-1}.
\]
Consequently, the symbols \(u_1,\ldots,u_{k-1}\) are already recovered from \(v\), and it remains only to determine \(u_0\). The final coordinate of \(v\)
satisfies
\[
  v_{k-1}
  =-\sum_{j=0}^{k-1}a_j u_j
  =-u_0-\sum_{j=1}^{k-1}a_j u_j,
\]
where the second equality uses \(a_0=1\). Substituting \(u_j=v_{j-1}\) for \(1\le j\le k-1\) and solving for \(u_0\) yields \(u_0=-v_{k-1}-\sum_{j=1}^{k-1}a_jv_{j-1}\). All arithmetic is performed in \(\Zq\). The vector \(u\) is therefore uniquely
determined by \(v\), and the inverse of \(T\) is given explicitly by
\[
  T^{-1}(v_0,\ldots,v_{k-1})
  =
  \left(
    -v_{k-1}-\sum_{j=1}^{k-1}a_jv_{j-1},
    v_0,\ldots,v_{k-2}
  \right).
\]
Since \(T\) admits a two-sided inverse, it is a bijection, and hence a permutation of \(\Zq^k\).
\end{proof}

For a state $u\in\Zq^k$ let $x_t(u)$ denote the first coordinate of $T^t(u)$ for $t\ge 0$. Iterating the shift in \cref{eq:transition} gives, for every
$t\ge 0$, the state-output identity
\begin{equation}\label{eq:state-output}
  T^t(u)=\bigl(x_t(u),x_{t+1}(u),\ldots,x_{t+k-1}(u)\bigr)
\end{equation}
together with the linear recurrence
\begin{equation}\label{eq:global-recurrence}
  x_{t+k}(u)+\sum_{j=0}^{k-1}a_j x_{t+j}(u)=0\qquad(t\ge 0).
\end{equation}
Because $T$ is a permutation of a finite set, the state space decomposes into a disjoint union of directed cycles. For each state cycle $\gamma$ of length
$\ell$, choose and fix once and for all a base state $u_\gamma\in\gamma$, and record the cyclic output sequence
\begin{equation}\label{eq:output-cycle}
  A_\gamma:=
  \bigl(x_0(u_\gamma),x_1(u_\gamma),\ldots,x_{\ell-1}(u_\gamma)\bigr)
\end{equation}
of length $\ell$. This fixed choice is needed because, under our definition, a cyclic sequence is an indexed tuple rather than an equivalence class modulo
rotation. If \(u=T^d(u_\gamma)\) for an integer \(d\in\{0,\ldots,\ell-1\}\), then
\begin{equation}\label{eq:phase-shift-output}
  x_t(u)=x_{t+d}(u_\gamma)\qquad(t\ge0),
\end{equation}
where the right-hand sequence is read periodically. Thus changing the base state rotates the tuple, while the collection of its cyclic windows remains
unchanged.

\begin{figure}[ht]
\centering
\begin{tikzpicture}[font=\small, >=Stealth,
  cell/.style={draw, minimum width=9mm, minimum height=7mm},
  dot/.style={circle, fill, inner sep=1.2pt}]
\node[cell] (c0) at (0,0) {$x_t$};
\node[cell] (c1) at (1.05,0) {$x_{t+1}$};
\node (cd) at (2.05,0) {$\cdots$};
\node[cell] (c3) at (3.15,0) {$x_{t+k-1}$};
\node[cell, fill=black!8] (c4) at (4.55,0) {$x_{t+k}$};
\draw[->] (c0.west) -- ++(-0.6,0) node[left]{out};
\node[draw, rounded corners, minimum width=44mm, minimum height=7mm]
  (sum) at (1.75,-1.55) {$-\sum_{j=0}^{k-1} a_j\,x_{t+j}$};
\draw[->] (c0.south) -- (c0.south |- sum.north) node[midway,left=-1pt]{\scriptsize$a_0$};
\draw[->] (c1.south) -- (c1.south |- sum.north) node[midway,left=-1pt]{\scriptsize$a_1$};
\draw[->] (c3.south) -- (c3.south |- sum.north) node[midway,right=-1pt]{\scriptsize$a_{k-1}$};
\draw[->] (sum.east) -| (c4.south);
\node[font=\footnotesize] at (1.9,0.95) {(a) shift register $T$};
\begin{scope}[shift={(7.8,-0.35)}]
  \foreach \i in {1,...,7} { \node[dot] (a\i) at ({360/7*\i}:0.85) {}; }
  \foreach \i in {1,...,7} { \pgfmathtruncatemacro{\j}{mod(\i,7)+1}
    \draw[->] (a\i) to[bend left=9] (a\j); }
  \foreach \i in {1,...,5} { \node[dot] (b\i) at ($(2.25,-0.05)+({360/5*\i}:0.7)$) {}; }
  \foreach \i in {1,...,5} { \pgfmathtruncatemacro{\j}{mod(\i,5)+1}
    \draw[->] (b\i) to[bend left=11] (b\j); }
  \node[dot] (s1) at (1.15,-1.55) {};
  \node[dot] (s2) at (1.6,-1.55) {};
  \draw[->] (s1) to[bend left=45] (s2);
  \draw[->] (s2) to[bend left=45] (s1);
  \node[font=\footnotesize] at (1.15,1.2) {long cycles};
  \node[font=\footnotesize] at (1.4,-2.0) {short cycle};
  \node[font=\footnotesize] at (1.15,-2.7) {(b) state space of $T$};
\end{scope}
\end{tikzpicture}
\caption{The radius-one construction. (a)~The feedback shift register \(T\) appends \(x_{t+k}=-\sum_{j=0}^{k-1}a_jx_{t+j}\) as the state shifts left. The length-\(m\) windows of its output streams are exactly the codewords of \(\cC_m\) (\cref{prop:radius-one-CSC}). (b)~Since \(a_0=1\), the transition map \(T\) is invertible (\cref{lem:T-permutation}), and its state space decomposes into cycles. The short-cycle estimate in \cref{lem:short-cycles} implies that almost all states lie on long cycles, so the resulting output family has few components.}
\label{fig:lfsr}
\end{figure}

Collecting these output sequences over all state cycles yields the radius-one family, and the next proposition identifies its windows with the code
$\cC_m$, thereby transferring the covering property of
\cref{prop:radius-one-code}.

\begin{proposition}\label{prop:radius-one-CSC}
Let \(\cF_m=(A_\gamma)_\gamma\) be the indexed family of output sequences, where \(\gamma\) ranges over the state cycles of \(T\). Then \(\cF_m\) is an
\((m,1)\)-CSC, and its total cyclic length is
\begin{equation}\label{eq:Fm-total-length}
  Q(\cF_m)=q^k.
\end{equation}
\end{proposition}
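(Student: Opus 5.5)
The proof splits into two parts: the total length, and the covering property.

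\emph{Total length.} By \cref{lem:T-permutation}, $T$ is a permutation of $\Zq^k$, so the state space is the disjoint union of its cycles. Each output sequence $A_\gamma$ has length $|\gamma|$ by \cref{eq:output-cycle}. Summing over all cycles gives $Q(\cF_m)=\sum_\gamma|\gamma|=q^k$, which is \cref{eq:Fm-total-length}.

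\emph{Covering property.} The plan is to show that every codeword of $\cC_m$ occurs as an $m$-window of some $A_\gamma$, and then to invoke \cref{prop:radius-one-code}.

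Let $x\in\cC_m$ and set $u:=(x_0,\ldots,x_{k-1})\in\Zq^k$. We compare $x$ with the first $m$ output symbols $x_0(u),\ldots,x_{m-1}(u)$.
\begin{itemize}
\item By the state-output identity \cref{eq:state-output} at $t=0$, we have $x_i(u)=x_i$ for $0\le i<k$.
\item For $0\le t<r$, the recurrence \cref{eq:global-recurrence} determines $x_{t+k}(u)$ from the preceding $k$ outputs. The defining constraints \cref{eq:def-Cm} determine $x_{t+k}$ from the preceding $k$ coordinates of $x$ by the same rule.
\end{itemize}
By induction on $t$, we get $x_{t+k}(u)=x_{t+k}$ for all $t<r$. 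Since $m=k+r$, this yields $(x_0(u),\ldots,x_{m-1}(u))=x$.

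Now let $\gamma$ be the cycle containing $u$, of length $\ell$, and write $u=T^d(u_\gamma)$. By \cref{eq:phase-shift-output}, $x_t(u)=x_{t+d}(u_\gamma)$ for all $t\ge0$. The sequence $t\mapsto x_t(u_\gamma)$ has period $\ell$, because $T^{t+\ell}(u_\gamma)=T^t(u_\gamma)$ and $x_t$ is the first coordinate of $T^t(u_\gamma)$. Hence the periodic reading of $A_\gamma$ agrees with $x_t(u_\gamma)$ for all $t$, and $x=A_\gamma[d,m]$. This holds even when $m>\ell$, thanks to the periodic-window convention of \cref{eq:window}.

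Finally, take any $y\in\Zq^m$. By \cref{prop:radius-one-code} there is $x\in\cC_m$ with $\dist(y,x)\le1$. By the previous step, $x$ is a window of some member of $\cF_m$. Therefore $\cF_m$ is an $(m,1)$-CSC.

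The only delicate point is this periodicity and indexing step: one must check that $A_\gamma$ read cyclically reproduces the full infinite output stream, including windows longer than the cycle. The rest is bookkeeping.
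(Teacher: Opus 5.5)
Your proof is correct and follows the paper's argument: $Q(\cF_m)=q^k$ comes from the cycle partition of the permutation $T$, and the covering property comes from realising each codeword of $\cC_m$ as $A_\gamma[d,m]$, starting from the state $u=(x_0,\ldots,x_{k-1})$, and then applying \cref{prop:radius-one-code}. The paper also proves the reverse inclusion (every $m$-window of $\cF_m$ lies in $\cC_m$), but only the inclusion you prove is needed for the CSC property.
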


\begin{proof}
\emph{Every window is a codeword.} Fix a state cycle \(\gamma\) of length \(\ell\) and an integer \(d\in\{0,\ldots,\ell-1\}\), and set \(u:=T^d(u_\gamma)\). By \cref{eq:phase-shift-output},
\[
  A_\gamma[d,m]
  =\bigl(x_0(u),x_1(u),\ldots,x_{m-1}(u)\bigr).
\]
The recurrence \cref{eq:global-recurrence} then ensures that this window satisfies the $r$ checks of \cref{eq:def-Cm}. The conclusion persists even
when $\ell<m$: since $T^\ell(u)=u$, the output sequence emanating from $u$ is $\ell$-periodic, precisely as demanded by the cyclic-window convention.
Consequently, every $m$-window of every member of $\cF_m$ belongs to $\cC_m$.

\emph{Every codeword is a window.} Conversely, let $c=(c_0,\ldots,c_{m-1})\in\cC_m$ and take the initial state $u=(c_0,\ldots,c_{k-1})$. Starting from this state, the recurrence \cref{eq:global-recurrence} generates the symbols $x_k(u),\ldots,x_{m-1}(u)$. As $c$ satisfies the same $r$ check equations, induction on each newly generated coordinate yields
\[
  \bigl(x_0(u),\ldots,x_{m-1}(u)\bigr)=c.
\]
The state $u$ lies on a unique cycle $\gamma$, and relative to the fixed base state $u_\gamma$ there is a unique integer \(d\in\{0,\ldots,\ell-1\}\) for which \(u=T^d(u_\gamma)\). Invoking \cref{eq:phase-shift-output} once more gives
\(c=A_\gamma[d,m]\). The collection of all $m$-windows occurring in $\cF_m$ therefore coincides exactly with $\cC_m$, which has covering radius at most one by \cref{prop:radius-one-code}; this establishes the CSC property. Finally, the state cycles partition the $q^k$ states and $\len(A_\gamma)=|\gamma|$, whence $Q(\cF_m)=\sum_\gamma|\gamma|=q^k$.
\end{proof}

The next estimate is the main quantitative input for the construction. It shows that almost all states lie on \emph{long} cycles, and hence that the
family \(\cF_m\) has few components.

\begin{lemma}\label{lem:short-cycles}
For every positive integer $h$, the total length of those members of $\cF_m$ whose length is strictly less than $h$ is at most
\begin{equation}\label{eq:short-total}
  \sum_{\ell=1}^{h-1}q^\ell<\frac{q^h}{q-1}.
\end{equation}
Consequently the number of components satisfies
\begin{equation}\label{eq:number-components-general}
  D(\cF_m)\le\frac{q^k}{h}+\frac{q^h}{q-1}.
\end{equation}
\end{lemma}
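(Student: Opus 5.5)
The plan is to bound the number of states lying on cycles of each fixed length $\ell<h$ by $q^\ell$, and then sum over $\ell$.

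\emph{Step 1: states on short cycles are determined by $\ell$ symbols.} Fix $\ell$ with $1\le\ell\le h-1$. Let $u$ be a state on a cycle of length exactly $\ell$. Then $T^\ell(u)=u$, so the output stream $(x_t(u))_{t\ge0}$ is $\ell$-periodic. By the state-output identity \cref{eq:state-output}, $u=(x_0(u),\ldots,x_{k-1}(u))$. Every coordinate of $u$ is therefore one of $x_0(u),\ldots,x_{\ell-1}(u)$, read periodically. This holds even when $\ell<k$.

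Consequently the map $u\mapsto(x_0(u),\ldots,x_{\ell-1}(u))\in\Zq^\ell$ is injective on the set of states whose cycle length is exactly $\ell$. Hence at most $q^\ell$ such states exist. (If $\ell\ge k$, the bound $q^k\le q^\ell$ is trivial.)

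\emph{Step 2: sum over short lengths.} Since $\len(A_\gamma)=|\gamma|$ and the cycles partition the state space, the total length of members with length $\ell<h$ equals the number of states on such cycles. This is at most $\sum_{\ell=1}^{h-1}q^\ell$. The geometric sum evaluates to
\[
\frac{q^h-q}{q-1}<\frac{q^h}{q-1},
\]
which gives \cref{eq:short-total}.

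\emph{Step 3: count components.} Split the members of $\cF_m$ into short ones (length $<h$) and long ones (length $\ge h$).

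For the short members, each has length at least $1$, so their number is at most their total length. This is less than $q^h/(q-1)$.

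For the long members, their total length is at most $Q(\cF_m)=q^k$ by \cref{prop:radius-one-CSC}, and each has length at least $h$. So there are at most $q^k/h$ of them.

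Adding the two counts yields \cref{eq:number-components-general}.

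The only point needing care is Step 1. The injectivity argument must use the periodicity of the output stream rather than any assumption that $\ell\ge k$, so that it covers cycles shorter than the register length.
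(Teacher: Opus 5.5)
Your proposal is correct and follows essentially the same route as the paper: you inject the states with $T^\ell(u)=u$ into $\Zq^\ell$ via their first $\ell$ output symbols, using periodicity to handle $\ell<k$. You then sum the geometric series and count long cycles by $q^k/h$ and short cycles by their total length. The only cosmetic difference is that the paper bounds the whole fixed-point set $\Fix(T^\ell)$ rather than just the states on cycles of length exactly $\ell$.
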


\begin{proof}
For $\ell\ge 1$, consider the fixed-point set $\Fix(T^\ell)=\{u\in\Zq^k:T^\ell(u)=u\}$. Whenever $u\in\Fix(T^\ell)$, one has $x_{t+\ell}(u)=x_t(u)$ for every $t\ge 0$. By \cref{eq:state-output}, the initial block of $\ell$ output symbols already pins down the state: if $\ell<k$, the remaining state coordinates arise by periodic repetition, while if $\ell\ge k$ the state is itself contained among the first $k$ of these symbols. The map $u\mapsto(x_0(u),\ldots,x_{\ell-1}(u))$ is therefore injective on $\Fix(T^\ell)$, and consequently
\begin{equation}\label{eq:fix-bound}
  |\Fix(T^\ell)|\le q^\ell.
\end{equation}
Since every state lying on a cycle of length exactly $\ell$ belongs to $\Fix(T^\ell)$, at most $q^\ell$ states lie on length-$\ell$ cycles. Summing
this bound over $1\le\ell<h$ yields \cref{eq:short-total}, the total length of the short components.

Each of the remaining cycles has length at least $h$; as the cycles jointly account for all $q^k$ states, there can be at most $q^k/h$ such long cycles.
The number of short cycles is in turn bounded by their total length, hence by \cref{eq:short-total}. Adding the two estimates gives \cref{eq:number-components-general}.
\end{proof}

We now record the asymptotic consequences of the radius-one construction. These estimates will be used in the radius-\(R\) construction, and they also
show that the radius-one case already has the correct order of magnitude.

\begin{lemma}\label{lem:radius-one-asymptotics}
Write \(Q_m:=Q(\cF_m)=q^{\,m-r_q(m)}\). Then, as \(m\to\infty\), we have
\begin{align}
  \frac{q^m}{m}
  &\le Q_m
  \le \left(q+o_q(1)\right)\frac{q^m}{m},
  \label{eq:Qm-bounds}\\[2pt]
  D(\cF_m)
  &\le \left(1+o_q(1)\right)\frac{Q_m}{m}.
  \label{eq:Dm-asymptotic}
\end{align}
Consequently, the radius-one covering sequence length satisfies
\begin{equation}\label{eq:L-radius-one-bound}
  \cL_q(m,1)
  \le
  \left(2q+o_q(1)\right)\frac{q^m}{m}.
\end{equation}
\end{lemma}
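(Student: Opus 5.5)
The plan is to derive the three estimates in turn from the two inequalities in \cref{eq:r-inequalities}, from \cref{lem:short-cycles} with a suitable choice of \(h\), and from a linearisation step that merges the family into one cyclic sequence.

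\emph{Bounds on \(Q_m\).} For the lower bound in \cref{eq:Qm-bounds}, the first inequality of \cref{eq:r-inequalities} gives \(q^r\le m-2r+1\le m\). Hence \(Q_m=q^m/q^r\ge q^m/m\). For the upper bound, the second inequality gives \(q^{r+1}>m-2r-1\), so \(q^r>(m-2r-1)/q\). Since \(r=\bigO_q(\log m)\), we have \(m-2r-1=(1-o_q(1))m\), and therefore
\[
Q_m=\frac{q^m}{q^r}<\frac{q\,q^m}{m-2r-1}=(q+o_q(1))\frac{q^m}{m}.
\]

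\emph{Bound on \(D(\cF_m)\).} Apply \cref{lem:short-cycles} with \(h:=m\). This gives
\[
D(\cF_m)\le \frac{Q_m}{m}+\frac{q^m}{q-1}.
\]
Here the second term is far too large, since it is comparable to \(Q_m\) itself. So \(h\) must be chosen smaller than \(m\) while staying asymptotically equal to it: I need \(q^h/(q-1)=o(Q_m/m)\) and \(h=(1-o(1))m\) simultaneously. Since \(Q_m/m\ge q^m/m^2\), I take
\[
h:=m-\lceil 3\log_q m\rceil.
\]
Then \(q^h\le q^m/m^3=o(q^m/m^2)\), so \(q^h/(q-1)=o(Q_m/m)\). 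Also \(Q_m/h=(1+o(1))Q_m/m\). Adding the two contributions gives \cref{eq:Dm-asymptotic}. Choosing \(h\) to balance these two terms is the only delicate point in the argument.

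\emph{Merging into one sequence.} To get \cref{eq:L-radius-one-bound}, I will join the \(D(\cF_m)\) components into a single cyclic sequence. For each component \(A\), I cut the cycle at some phase and write out the linear word \(A[0,\len(A)+m-1]\), which has length \(\len(A)+m-1\). Every \(m\)-window of \(A\) appears inside this word, including when \(\len(A)<m\), because the word is read periodically. Concatenating these words cyclically yields a sequence in which every window of every component still occurs as a window. Its total length is \(Q_m+(m-1)D(\cF_m)\le Q_m+(1+o(1))Q_m=(2+o(1))Q_m\). Combining this with the upper bound in \cref{eq:Qm-bounds} gives \((2q+o_q(1))q^m/m\).
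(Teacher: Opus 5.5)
Your proposal is correct and follows essentially the same route as the paper: the two inequalities of \cref{eq:r-inequalities} give the bounds on $Q_m$, \cref{lem:short-cycles} with a threshold $h=(1-o(1))m$ gives the bound on $D(\cF_m)$, and the linearise-and-concatenate step of \cref{lem:concatenation} (which you reprove inline) gives the final length. The only difference is the choice of threshold. You take $h=m-\lceil 3\log_q m\rceil$ and compare $q^h$ with the lower bound $Q_m/m\ge q^m/m^2$, whereas the paper takes $h=\lfloor m-\sqrt m\rfloor$ and uses $r=\bigO_q(\log m)$. Both choices work, and yours gives a slightly faster $o_q(1)$ rate.
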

\begin{proof}
We begin by estimating the total cyclic length. The inequality \(q^r\le m\) from \cref{eq:r-inequalities} immediately yields
\[
  Q_m=\frac{q^m}{q^r}\ge \frac{q^m}{m}.
\]
For the matching upper bound, the second inequality in \cref{eq:r-inequalities} provides \(q^r>(m-2r-1)/q\), so that, for all sufficiently large \(m\),
\begin{equation}\label{eq:Q-upper-explicit}
  Q_m<\frac{q^{m+1}}{m-2r-1}.
\end{equation}
Because \(r=\bigO_q(\log m)\), we have \(m-2r-1=(1-o_q(1))m\), and therefore
\[
  Q_m\le \left(q+o_q(1)\right)\frac{q^m}{m}.
\]
This establishes \cref{eq:Qm-bounds}.

We turn next to the number of components, applying \cref{lem:short-cycles} with \(h=\lfloor m-\sqrt m\rfloor\) to obtain
\[
  D(\cF_m)\le \frac{Q_m}{h}+\frac{q^h}{q-1}.
\]
Since \(h/m\to 1\), the first term satisfies
\[
  \frac{Q_m}{h}
  =
  \left(1+o_q(1)\right)\frac{Q_m}{m},
\]
while the second term is negligible relative to \(Q_m/m\); indeed,
\[
  \frac{q^h}{Q_m/m}
  =
  m q^{\,h-(m-r)}
  \le
  m q^{\,r-\sqrt m}
  \longrightarrow 0,
\]
again because \(r=\bigO_q(\log m)\). Combining these two estimates gives
\[
  D(\cF_m)
  \le
  \left(1+o_q(1)\right)\frac{Q_m}{m},
\]
which proves \cref{eq:Dm-asymptotic}.

Finally, since \(\cF_m\) is an \((m,1)\)-CSC, the standard linearisation and concatenation procedure for covering sequence codes \cite[Section~3]{CheeEtAl2025}, recalled later in \cref{lem:concatenation}, produces a single \((m,1)\)-covering sequence of length at most
\[
  Q_m+(m-1)D(\cF_m).
\]
Invoking \cref{eq:Dm-asymptotic}, this quantity is bounded by
\[
  Q_m+(m-1)\left(1+o_q(1)\right)\frac{Q_m}{m}
  =
  \left(2+o_q(1)\right)Q_m.
\]
Together with the upper bound in \cref{eq:Qm-bounds}, we conclude that
\[
  \cL_q(m,1)
  \le
  \left(2q+o_q(1)\right)\frac{q^m}{m},
\]
as claimed.
\end{proof}

\section{The general radius-\texorpdfstring{$R$}{R} construction}
\label{sec:radiusR}

We now extend the construction from radius one to an arbitrary fixed radius \(R\). The underlying idea is to take \(R\) radius-one covering families and
interleave them according to the residue classes modulo \(R\). Since each residue class contributes at most one error, every interleaved window is then
covered within radius \(R\). This interleaving step again produces a cyclic family, which is subsequently converted into a single covering sequence by
linearisation and concatenation.

Interleaving constructions for covering sequences were employed in~\cite[Section~5]{CheeEtAl2025} under coprimality assumptions on the component
lengths; it is instructive to recall the basic two-sequence situation. Let \(A\) and \(B\) be \((n_1,R_1)\)- and \((n_2,R_2)\)-covering sequences of
lengths \(\ell_1\) and \(\ell_2\), respectively, where
\[
  n_1=n_2
  \qquad\text{or}\qquad
  n_1=n_2+1.
\]
When \(\gcd(\ell_1,\ell_2)=1\), the standard alternating interleaving of \(A\) and \(B\) constitutes an \((n_1+n_2,R_1+R_2)\)-covering sequence of length
\(2\ell_1\ell_2\), the coprimality condition ensuring that every pair of starting phases in \(\mathbb Z_{\ell_1}\times\mathbb Z_{\ell_2}\) is realised as
the interleaved sequence is read cyclically.

When \(\gcd(\ell_1,\ell_2)>1\), by contrast, a single interleaving no longer visits every phase pair. Instead, the diagonal action on
\(\mathbb Z_{\ell_1}\times\mathbb Z_{\ell_2}\) partitions the phase space into \(d=\gcd(\ell_1,\ell_2)\) orbits, each of which yields one interleaved cyclic sequence of length
\[
  2\lcm(\ell_1,\ell_2),
\]
so that assembling all \(d\) orbits produces a covering sequence code of total cyclic length
\[
  d\cdot 2\lcm(\ell_1,\ell_2)=2\ell_1\ell_2.
\]
The loss of coprimality is thus accommodated by replacing the single interleaved sequence with a family indexed by the diagonal phase orbits,
without any increase in the total cyclic length. The construction developed below applies precisely this diagonal-orbit viewpoint to \(R\) component
sequences.

Write $n=Rm+s,\, 0\le s<R.$ The positions of a length-\(n\) word then partition into \(R\) residue classes modulo \(R\): the first \(s\) classes each contain \(m+1\) positions, while the remaining \(R-s\) classes each contain \(m\) positions. Accordingly, set
\begin{equation}\label{eq:mi-definition}
  m_i:=
  \begin{cases}
    m+1, & 0\le i<s,\\
    m,   & s\le i<R,
  \end{cases}
  \qquad\text{so that}\qquad \sum_{i=0}^{R-1}m_i=n.
\end{equation}

Since \(R\) is fixed and \(m=\lfloor n/R\rfloor\to\infty\), we have \(m_i\ge q+1\) for every \(0\le i<R\) and all sufficiently large \(n\). Thus the radius-one construction of \cref{sec:radius1} applies with span \(m_i\) for each residue class \(i\). We denote the resulting family by
\[
  \cF_i:=\cF_{m_i},\qquad Q_i:=Q(\cF_i),
  \qquad P:=\prod_{i=0}^{R-1}Q_i.
\]

We now describe how to interleave one component drawn from each of these \(R\) families. Fix a tuple
\[
  \boldsymbol{A}=(A_0,\ldots,A_{R-1})
  \in \cF_0\times\cdots\times\cF_{R-1},
\]
and write \(\ell_i:=\len(A_i)\). A choice of starting phases for this tuple is an element of the product
\[
  G_{\boldsymbol{A}}:=
  \mathbb{Z}_{\ell_0}\times\cdots\times\mathbb{Z}_{\ell_{R-1}}.
\]
Advancing an interleaved sequence by one full round of \(R\) symbols advances each component phase by one, so the relevant phase evolution is the diagonal
translation
\begin{equation}\label{eq:diagonal-action}
  \tau(\alpha_0,\ldots,\alpha_{R-1})
  :=
  (\alpha_0+1,\ldots,\alpha_{R-1}+1).
\end{equation}

The order of this translation is
\begin{equation}\label{eq:L-lcm}
  L:=\lcm(\ell_0,\ldots,\ell_{R-1}).
\end{equation}
All additions in the \(i\)-th coordinate are taken modulo \(\ell_i\). For
\(\boldsymbol\alpha\in G_{\boldsymbol A}\), define its diagonal orbit by
\[
  \operatorname{Orb}(\boldsymbol\alpha)
  :=
  \{\,\tau^t(\boldsymbol\alpha):0\le t<L\,\}.
\]
Thus every diagonal orbit has length \(L\), and the number of such orbits is $ \frac{\prod_{i=0}^{R-1}\ell_i}{L}.$

For each diagonal orbit \(\cD\subseteq G_{\boldsymbol A}\), choose its lexicographically least representative
\(\boldsymbol{\alpha}=(\alpha_0,\ldots,\alpha_{R-1})\), and define a cyclic sequence \(W_{\boldsymbol A,\cD}\) of length \(RL\) by
\begin{equation}\label{eq:interleaved-word}
  W_{\boldsymbol{A},\cD}[\,Rt+i\,]
  :=
  A_i[\,\alpha_i+t\,],
  \qquad 0\le t<L,\quad 0\le i<R.
\end{equation}
Here the index on the left is read modulo \(RL\), while the phase on the right is read modulo \(\ell_i\). The definition is consistent because
\(\ell_i\mid L\) for every \(i\), so that replacing \(t\) by \(t+L\) leaves the symbol \(A_i[\alpha_i+t]\) unchanged. Equivalently, the subsequence of
\(W_{\boldsymbol{A},\cD}\) occupying the positions congruent to \(i\pmod R\) is the cyclic sequence \(A_i\) read from phase \(\alpha_i\).

Finally, let \(\cP_n\) denote the indexed family of all sequences \(W_{\boldsymbol{A},\cD}\), where \(\boldsymbol{A}\) ranges over
\(\cF_0\times\cdots\times\cF_{R-1}\) and \(\cD\) ranges over all diagonal orbits in \(G_{\boldsymbol{A}}\). The construction is illustrated in
\cref{fig:interleave} for \(R=3\).

It is precisely the use of all diagonal orbits that removes any coprimality requirement on the component lengths. Were one to fix only a single phase
vector, the resulting interleaving would visit nothing beyond the diagonal orbit of that vector, and in general such a sequence would fail to realise
every combination of component phases unless the lengths satisfied suitable coprimality assumptions. By admitting one interleaved sequence for each
diagonal orbit, every phase vector is represented somewhere in the family. As shown in \cref{prop:interleaving-size}, the number of orbits and the common
orbit length cancel in the total-length calculation, so that this enlargement of the family incurs no increase in the total cyclic length.

\begin{figure}[ht]
\centering
\begin{tikzpicture}[font=\footnotesize, >=Stealth,
  w/.style={draw, minimum width=13mm, minimum height=8mm, font=\scriptsize}]
\node[w, fill=class0] (w0) at (0,0)     {$A_0[\alpha_0]$};
\node[w, fill=class1] (w1) at (1.36,0)  {$A_1[\alpha_1]$};
\node[w, fill=class2] (w2) at (2.72,0)  {$A_2[\alpha_2]$};
\node[w, fill=class0] (w3) at (4.08,0)  {$A_0[\alpha_0{+}1]$};
\node[w, fill=class1] (w4) at (5.44,0)  {$A_1[\alpha_1{+}1]$};
\node[w, fill=class2] (w5) at (6.80,0)  {$A_2[\alpha_2{+}1]$};
\node[w, fill=class0] (w6) at (8.16,0)  {$A_0[\alpha_0{+}2]$};
\node[left=2mm of w0] {$W:$};
\draw[decorate,decoration={brace,amplitude=4pt,mirror}]
  (w0.south west) -- (w2.south east) node[midway,below=5pt]{round $0$};
\draw[decorate,decoration={brace,amplitude=4pt,mirror}]
  (w3.south west) -- (w5.south east) node[midway,below=5pt]{round $1$};
\draw[decorate,decoration={brace,amplitude=4pt,mirror}]
  (w6.south west) -- (w6.south east) node[midway,below=5pt]{partial};
\node[anchor=west] at (-0.3,1.25)
  {residue class $i\pmod R\;\to\;$ a window of $A_i$\, (length $m{+}1$ if $i<s$, else $m$)};
\end{tikzpicture}
\caption{Diagonal interleaving for $R=3$ and $n=7$ (so $m=2$, $s=1$). The interleaved sequence reads the components round by round,
$W[Rt+i]=A_i[\alpha_i+t]$. The positions in residue class $i$ modulo $R$ trace out a window of $A_i$, one symbol longer for the first $s$ classes, so, each class being covered to within one error, the length-$n$ window of $W$ is covered to within $R$ errors (\cref{prop:interleaving-cover}).}
\label{fig:interleave}
\end{figure}

We first establish the covering property of \(\cP_n\), and then derive the corresponding bounds on its total length and number of components.

\begin{proposition}\label{prop:interleaving-cover}
The family $\cP_n$ is an $(n,R)$-CSC.
\end{proposition}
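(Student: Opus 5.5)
Given a target \(y=(y_0,\ldots,y_{n-1})\in\Sigmaq^n\), the plan is to split it along the residue classes modulo \(R\), cover each class with the radius-one family \(\cF_i\), and then locate an interleaved sequence in \(\cP_n\) that realises the chosen component phases simultaneously. Concretely, for \(0\le i<R\) define the subword
\[
  y^{(i)}:=(y_i,y_{i+R},y_{i+2R},\ldots,y_{i+(m_i-1)R})\in\Zq^{m_i};
\]
by \cref{eq:mi-definition} and \(n=Rm+s\) the positions \(i+jR\) with \(0\le j<m_i\) are exactly those \(p<n\) with \(p\equiv i\pmod R\). Hence the classes partition \(\{0,\ldots,n-1\}\) and
\(\dist(y,z)=\sum_i \dist(y^{(i)},z^{(i)})\) for any \(z\in\Sigmaq^n\) decomposed the same way.

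By \cref{prop:radius-one-CSC}, each \(\cF_i\) is an \((m_i,1)\)-CSC. So for every \(i\) I pick a component \(A_i\in\cF_i\) and a phase \(\beta_i\in\mathbb Z_{\ell_i}\) with \(\dist\bigl(y^{(i)},A_i[\beta_i,m_i]\bigr)\le 1\). Windows longer than \(\ell_i\) are read periodically, which is allowed. Set \(\boldsymbol A=(A_0,\ldots,A_{R-1})\) and \(\boldsymbol\beta=(\beta_0,\ldots,\beta_{R-1})\in G_{\boldsymbol A}\), and let \(\cD=\operatorname{Orb}(\boldsymbol\beta)\) with lexicographically least representative \(\boldsymbol\alpha\). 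Then there is \(t_0\in\{0,\ldots,L-1\}\) with \(\tau^{t_0}(\boldsymbol\alpha)=\boldsymbol\beta\), that is, \(\alpha_i+t_0\equiv\beta_i\pmod{\ell_i}\) for all \(i\). This step uses the fact that every phase vector lies in some orbit, and it is exactly where the diagonal-orbit device replaces coprimality.

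The remaining step is to check that the window \(z:=W_{\boldsymbol A,\cD}[Rt_0,n]\) satisfies \(z^{(i)}=A_i[\beta_i,m_i]\). Its position \(p=i+jR\) is the symbol \(W[R(t_0+j)+i]\). By \cref{eq:interleaved-word}, extended periodically, this equals \(A_i[\alpha_i+t_0+j]=A_i[\beta_i+j]\).

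The main point of care is index bookkeeping. \(W\) has length \(RL\), and \(n\) may exceed \(RL\), so the indices \(R(t_0+j)+i\) must be read modulo \(RL\). Consistency holds because replacing \(t\) by \(t+L\) fixes \(A_i[\alpha_i+t]\), since \(\ell_i\mid L\); so the formula \(W[Rt+i]=A_i[\alpha_i+t]\) holds for all integers \(t\), and the identification above is valid. Summing over classes gives \(\dist(y,z)\le R\), which proves that \(\cP_n\) is an \((n,R)\)-CSC.
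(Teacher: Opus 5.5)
Your proposal is correct and follows essentially the same argument as the paper: split $y$ into the residue-class subwords $y^{(i)}$, cover each one by a component $A_i\in\cF_i$ at a phase $\beta_i$, find the diagonal orbit of $\boldsymbol\beta$ and the shift $t$ with $\tau^{t}(\boldsymbol\alpha)=\boldsymbol\beta$, and read off the window of $W_{\boldsymbol A,\cD}$ starting at $Rt$. Your extra care about reading indices modulo $RL$, which is valid because $\ell_i\mid L$, is the same consistency remark the paper makes right after defining the interleaved word.
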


\begin{proof}
Let \(y=(y_0,\ldots,y_{n-1})\in\Zq^n\) be an arbitrary target word. For each \(0\le i<R\), extract the subword supported on the coordinates congruent to
\(i\) modulo \(R\):
\begin{equation}\label{eq:residue-subword}
  y^{(i)}:=(y_i,y_{i+R},\ldots,y_{i+(m_i-1)R})\in\Zq^{m_i}.
\end{equation}
The number of indices in \(\{0,\ldots,n-1\}\) congruent to \(i\) modulo \(R\) equals
\[
  \left\lfloor\frac{n-1-i}{R}\right\rfloor+1,
\]
which is \(m+1\) for \(0\le i<s\) and \(m\) for \(s\le i<R\); consequently
\(y^{(i)}\) has length \(m_i\), in accordance with \cref{eq:mi-definition}.

Since \(\cF_i\) is an \((m_i,1)\)-CSC, there exist a component \(A_i\in\cF_i\) and, writing \(\ell_i:=\len(A_i)\), a phase \(\beta_i\in\mathbb Z_{\ell_i}\) for which
\begin{equation}\label{eq:component-cover}
  \dist\bigl(y^{(i)},A_i[\beta_i,m_i]\bigr)\le 1.
\end{equation}
Set
\[
  \boldsymbol A:=(A_0,\ldots,A_{R-1}),
  \qquad
  L:=\lcm(\ell_0,\ldots,\ell_{R-1}).
\]
The phase vector
\[
  \boldsymbol\beta:=(\beta_0,\ldots,\beta_{R-1})
\]
belongs to \(G_{\boldsymbol A}\), and therefore lies in a unique diagonal orbit \(\cD\subseteq G_{\boldsymbol A}\). Denoting by
\(\boldsymbol\alpha=(\alpha_0,\ldots,\alpha_{R-1})\) the chosen representative of this orbit, we have, for some integer \(t\) determined modulo \(L\),
\[
  \tau^t(\boldsymbol\alpha)=\boldsymbol\beta,
  \qquad\text{that is,}\qquad
  \beta_i=\alpha_i+t
  \quad\text{in }\mathbb Z_{\ell_i}
  \quad(0\le i<R).
\]

Consider now the length-\(n\) window of \(W_{\boldsymbol A,\cD}\) beginning at the round boundary \(Rt\). For \(0\le i<R\) and \(0\le j<m_i\), the coordinate of this window corresponding to the \(j\)-th position of residue class \(i\) is
\[
  W_{\boldsymbol A,\cD}[R(t+j)+i]
  =
  A_i[\alpha_i+t+j]
  =
  A_i[\beta_i+j],
\]
where the phases on the right are read modulo \(\ell_i\). The subsequence of \(W_{\boldsymbol A,\cD}[Rt,n]\) on the coordinates congruent to \(i\) modulo
\(R\) is thus precisely the cyclic window \(A_i[\beta_i,m_i]\). Moreover, the window \(W_{\boldsymbol A,\cD}[Rt,n]\) spans \(m\) complete rounds followed by the first \(s\) symbols of the next round; equivalently, the first \(s\) residue classes contribute \(m+1\) symbols and the remaining \(R-s\) residue
classes contribute \(m\) symbols.

Since the residue classes modulo \(R\) partition the \(n\) coordinates,
\cref{eq:component-cover} yields
\[
  \dist\bigl(y,W_{\boldsymbol A,\cD}[Rt,n]\bigr)
  =
  \sum_{i=0}^{R-1}
  \dist\bigl(y^{(i)},A_i[\beta_i,m_i]\bigr)
  \le R.
\]
Hence every target word \(y\in\Zq^n\) lies within Hamming distance \(R\) of a length-\(n\) window of some member of \(\cP_n\), and therefore \(\cP_n\) is an \((n,R)\)-CSC.
\end{proof}

We next estimate the size of the family \(\cP_n\). The total cyclic length has an exact product form, because the number of diagonal orbits cancels with
their common orbit length. The bound on the number of components is obtained by combining the same orbit decomposition with the short-cycle estimate from
\cref{lem:short-cycles}.

\begin{proposition}\label{prop:interleaving-size}
The total cyclic length of $\cP_n$ is exactly
\begin{equation}\label{eq:product-total-length}
  Q(\cP_n)=RP.
\end{equation}
Moreover, for all sufficiently large \(m\), with
\(h=\lfloor m-\sqrt m\rfloor\),
\begin{equation}\label{eq:product-component-bound}
  D(\cP_n)\le\frac{P}{h}+\left(\frac{q^h}{q-1}\right)^{\!R},
\end{equation}
and consequently
\begin{equation}\label{eq:product-components-asymptotic}
  D(\cP_n)\le\left(1+o_{q,R}(1)\right)\frac{P}{m}.
\end{equation}
\end{proposition}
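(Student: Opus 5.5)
The identity \cref{eq:product-total-length} follows by counting orbits for each fixed tuple. Fix \(\boldsymbol A=(A_0,\ldots,A_{R-1})\in\cF_0\times\cdots\times\cF_{R-1}\) with \(\ell_i=\len(A_i)\) and \(L=\lcm(\ell_0,\ldots,\ell_{R-1})\). The diagonal translation \(\tau\) acts freely with order \(L\) on \(G_{\boldsymbol A}\). Hence \(G_{\boldsymbol A}\) splits into exactly \(\prod_i\ell_i/L\) orbits, and each orbit yields one sequence \(W_{\boldsymbol A,\cD}\) of length \(RL\). So the contribution of \(\boldsymbol A\) to \(Q(\cP_n)\) is \(R\prod_i\ell_i\), independent of \(L\). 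Summing over all tuples factorises:
\[
  Q(\cP_n)=R\prod_{i=0}^{R-1}\sum_{A\in\cF_i}\len(A)=R\prod_i Q_i=RP .
\]

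For \cref{eq:product-component-bound}, the same count shows that \(\boldsymbol A\) contributes exactly \(\prod_i\ell_i/L\) components. I would split the tuples into two classes, according to whether some \(\ell_i\ge h\) or not.

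\emph{Some \(\ell_i\ge h\).} Since \(\ell_i\mid L\), we have \(L\ge h\), so the contribution is at most \(\prod_i\ell_i/h\). Summing over all such tuples, and then enlarging the sum to all tuples, gives at most \(P/h\).

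\emph{All \(\ell_i<h\).} Bound the number of orbits trivially by \(|G_{\boldsymbol A}|=\prod_i\ell_i\). The sum over these tuples factorises as \(\prod_i\bigl(\sum_{A\in\cF_i,\ \len(A)<h}\len(A)\bigr)\). By \cref{lem:short-cycles}, applied to each \(\cF_i=\cF_{m_i}\) with the same \(h\) (the lemma holds for every positive \(h\), so the mismatch between \(m_i\in\{m,m+1\}\) and \(m\) is harmless), each factor is less than \(q^h/(q-1)\).

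Adding the two classes gives \cref{eq:product-component-bound}.

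For the asymptotic form \cref{eq:product-components-asymptotic}, first note \(h/m\to1\), so \(P/h=(1+o(1))P/m\). It then remains to show that \((q^h/(q-1))^R=o(P/m)\). By \cref{eq:Qm-bounds}, \(Q_i\ge q^{m_i}/m_i\ge q^{m_i}/(m+1)\), so \(P\ge q^{n}/(m+1)^R\). Hence
\[
  \frac{q^{Rh}}{P/m}\le m(m+1)^R q^{Rh-n}\le m(m+1)^R q^{-R\sqrt m}\longrightarrow0,
\]
using \(n\ge Rm\) and \(h\le m-\sqrt m\); the factor \((q-1)^{-R}\) is at most \(1\) and only helps.

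The only step needing care is the component count. The key choice is the case split by whether some component is long, together with the observation that one long component already forces the common orbit length \(L\) to be at least \(h\), since each \(\ell_i\) divides \(L\). Once that split is made, everything factorises over the \(R\) coordinates, and the remaining estimates are routine.
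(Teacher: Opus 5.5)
Your proposal is correct and follows the paper's proof essentially step for step. It uses the same orbit count giving $R\prod_i\ell_i$ per tuple, the same split according to whether some $\ell_i\ge h$ (using $\ell_i\mid L$) with \cref{lem:short-cycles} applied factorwise in the all-short case, and the same comparison of $(q^h/(q-1))^R$ against $P/m$ via the lower bound in \cref{eq:Qm-bounds}; the only cosmetic differences are that you bound $\prod_i m_i$ by $(m+1)^R$ and drop the harmless factor $(q-1)^{-R}$.
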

\begin{proof}
\emph{Total length.} Fix a tuple $\boldsymbol{A}$ with component lengths $\ell_0,\ldots,\ell_{R-1}$ and set $L=\lcm(\ell_0,\ldots,\ell_{R-1})$. This
tuple contributes $(\prod_{i=0}^{R-1}\ell_i)/L$ orbits, each of which is an interleaved sequence of length $RL$, and hence a total length of
\[
  \frac{\prod_{i=0}^{R-1}\ell_i}{L}\cdot RL = R\prod_{i=0}^{R-1}\ell_i,
\]
in which the factor $L$ cancels. Summing over all tuples and factoring the product yields
\[
  Q(\cP_n)=R\prod_{i=0}^{R-1}\!\Bigl(\sum_{A_i\in\cF_i}\len(A_i)\Bigr)
  =R\prod_{i=0}^{R-1}Q_i=RP,
\]
which is \cref{eq:product-total-length}.

\emph{Component count.} The same orbit count furnishes the exact identity
\begin{equation}\label{eq:D-product-exact}
  D(\cP_n)=\sum_{(A_0,\ldots,A_{R-1})}
  \frac{\prod_{i=0}^{R-1}\ell_i}{\lcm(\ell_0,\ldots,\ell_{R-1})}.
\end{equation}
We partition the tuples into two classes. If some \(\ell_i\ge h\), then \(\lcm(\ell_0,\ldots,\ell_{R-1})\ge h\) and the summand is at most $\prod_{i=0}^{R-1}\ell_i/h$; summing over all such tuples contributes at most $P/h$. If instead every $\ell_i<h$, we use \(\lcm(\ell_0,\ldots,\ell_{R-1})\ge1\) and factor:
\begin{equation}\label{eq:all-short-factorization}
  \sum_{\substack{\text{all }\ell_i<h}}\prod_i\ell_i
  =\prod_{i=0}^{R-1}\Bigl(\sum_{\substack{A_i\in\cF_i\\\len(A_i)<h}}
  \len(A_i)\Bigr).
\end{equation}
Applying \cref{lem:short-cycles} to each $\cF_i$ at the common threshold $h$, every factor is less than $q^h/(q-1)$, so this class contributes at most
$(q^h/(q-1))^R$. Adding the two contributions gives
\cref{eq:product-component-bound}.

\emph{Asymptotics.} The lower bound in \cref{eq:Qm-bounds} gives
\begin{equation}\label{eq:P-lower}
   P = \prod_{i=0}^{R-1} Q_i \ge \frac{q^{\sum_{i=0}^{R-1}m_i}}{\prod_{i=0}^{R-1}m_i} = \frac{q^n}{\prod_{i=0}^{R-1}m_i}.
\end{equation}
so that the second term of \cref{eq:product-component-bound}, measured against $P/m$, satisfies
\begin{equation}\label{eq:short-product-ratio}
  \frac{m}{P}\left(\frac{q^h}{q-1}\right)^{\!R} \le \frac{m\prod_{i=0}^{R-1}m_i}{(q-1)^R}\,q^{\,Rh-n}.
\end{equation}
Since $h\le m-\sqrt{m}$ and $n=Rm+s$, we have $Rh-n\le-R\sqrt{m}-s$, whence the right-hand side is a fixed-degree polynomial in $m$ multiplied by
$q^{-R\sqrt{m}}$ and therefore tends to $0$. As \(h/m\to1\), we have
\[
  \frac{P}{h}
  =
  \left(1+o_{q,R}(1)\right)\frac{P}{m} \, ,
\] which yields \cref{eq:product-components-asymptotic}.
\end{proof}

It remains to convert the cyclic family \(\cP_n\) into a single cyclic sequence. To this end, we invoke the standard linearisation and concatenation
procedure for covering sequence codes; see \cite[Section~3]{CheeEtAl2025}.

\begin{lemma}\label{lem:concatenation}
Let \(\cF\) be an \((n,R)\)-CSC. Then there exists a single
\((n,R)\)-covering sequence of length at most
\begin{equation}\label{eq:concatenation-bound}
  Q(\cF)+(n-1)\,D(\cF).
\end{equation}
\end{lemma}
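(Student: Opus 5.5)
The plan is to linearise each component of \(\cF\) and concatenate the resulting linear words into one cyclic sequence. Enumerate the members of \(\cF\) as \(A_1,\ldots,A_D\) with \(D=D(\cF)\) and \(\ell_j:=\len(A_j)\). For each \(j\), form the linear word
\[
  U_j:=\bigl(A_j[0],A_j[1],\ldots,A_j[\ell_j+n-2]\bigr),
\]
of length \(\ell_j+n-1\). Here the indices are read modulo \(\ell_j\), so the sequence is read periodically even when \(\ell_j<n\). The point of this choice is that the \(\ell_j\) linear length-\(n\) windows of \(U_j\), starting at offsets \(0,\ldots,\ell_j-1\), are exactly the cyclic windows \(A_j[d,n]\) for \(d\in\mathbb Z_{\ell_j}\).

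Next, let \(S\) be the cyclic sequence obtained by writing \(U_1U_2\cdots U_D\) and closing it up cyclically. Its length is
\[
  \sum_j(\ell_j+n-1)=Q(\cF)+(n-1)D(\cF),
\]
which is the bound claimed in \cref{eq:concatenation-bound}. If a shorter sequence were needed, one could also overlap, but that is not required here.

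For the covering property, fix \(y\in\Zq^n\). By the CSC property there is a \(j\) and a phase \(d\) with \(\dist(y,A_j[d,n])\le R\). Take \(d\in\{0,\ldots,\ell_j-1\}\). The word \(U_j\) occupies a contiguous block of \(S\) of length \(\ell_j+n-1\ge n\). The window of \(S\) starting at the position of \(U_j\)'s offset \(d\) lies entirely inside that block, because \(d+n-1\le \ell_j+n-2\). That window therefore equals \(A_j[d,n]\) and covers \(y\). This holds for every \(y\), so \(S\) is an \((n,R)\)-covering sequence of the stated length.

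The only delicate point is the bookkeeping in the window-containment step. One has to check that the periodic reading convention handles \(\ell_j<n\) correctly, and that no cyclic wrap-around of \(S\) is needed. Since each window used lies inside a single block \(U_j\), the wrap-around of \(S\) never matters. The windows that straddle block boundaries are simply extra windows and do no harm. I expect no genuine obstacle beyond this check.
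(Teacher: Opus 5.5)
Your proposal is correct and follows essentially the same route as the paper. You linearise each component by writing one period followed by its first \(n-1\) symbols (read periodically), concatenate the blocks, and close the result cyclically, so each needed window lies inside a single block and the length is exactly \(Q(\cF)+(n-1)D(\cF)\); your explicit indexing \(A_j[0],\ldots,A_j[\ell_j+n-2]\) modulo \(\ell_j\) also makes the case \(\ell_j<n-1\) precise, which the paper's phrase ``append its first \(n-1\) symbols'' leaves implicit.
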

\begin{proof}
For each component \(A\in\cF\), write out one full period of \(A\) and then append its first \(n-1\) symbols, producing a linear block of length
\(\len(A)+n-1\). By construction, every cyclic window \(A[j,n]\) occurs as an ordinary consecutive length-\(n\) subword of this block: should the window wrap around the end of the period, the appended \(n-1\) symbols supply precisely the required continuation.

We now concatenate these linear blocks over all components \(A\in\cF\), in any fixed order, and read the resulting word as a cyclic sequence. Since every
target word in \(\Sigmaq^n\) lies within distance \(R\) of some cyclic \(n\)-window of some component of \(\cF\), and every such cyclic window appears
inside the corresponding linearised block, the concatenated sequence is an \((n,R)\)-covering sequence. Indeed, passing from the linear word to a cyclic
sequence can only introduce further windows across the concatenation boundary, and hence cannot destroy the covering property.

The total length of the concatenated sequence is
\[
  \sum_{A\in\cF}\bigl(\len(A)+n-1\bigr)
  =
  Q(\cF)+(n-1)D(\cF),
\]
which proves the claim.
\end{proof}

We now complete the proof of the main theorem.

\begin{proof}[Proof of the Main Theorem.]
Let \(n=Rm+s\) with \(0\le s<R\), and let \(\cP_n\) be the interleaved
\((n,R)\)-CSC constructed above. By \cref{prop:interleaving-cover}, \(\cP_n\)
possesses the required covering property. Combining the size estimates of
\cref{prop:interleaving-size} with the concatenation bound of
\cref{lem:concatenation}, we obtain
\begin{align}
  \cL_q(n,R)
  &\le Q(\cP_n)+(n-1)D(\cP_n) \notag\\
  &\le RP+(n-1)\left(1+o_{q,R}(1)\right)\frac{P}{m} \notag\\
  &=\left(2R+o_{q,R}(1)\right)P,
  \label{eq:L-vs-P}
\end{align}
where we have used \(n/m=R+\bigO_R(1/m)\).

It remains to estimate \(P\). By \cref{eq:Qm-bounds}, uniformly for
\(m_i\in\{m,m+1\}\),
\[
  Q_i\le\left(q+o_q(1)\right)\frac{q^{m_i}}{m_i}.
\]
As \(R\) is fixed, multiplying these bounds over \(i=0,\ldots,R-1\) yields
\begin{align}
  P=\prod_{i=0}^{R-1}Q_i
  &\le
  \left(q^R+o_{q,R}(1)\right)
  \frac{q^{\sum_{i=0}^{R-1}m_i}}{\prod_{i=0}^{R-1}m_i} \notag\\
  &=
  \left(q^R+o_{q,R}(1)\right)
  \frac{q^n}{m^{R-s}(m+1)^s}.
  \label{eq:P-upper-near-final}
\end{align}
Moreover, since \(m=n/R+\bigO_R(1)\) and \(0\le s<R\),
\begin{equation}\label{eq:mi-product-asymptotic}
  m^{R-s}(m+1)^s
  =
  \left(1+o_R(1)\right)
  \left(\frac{n}{R}\right)^R .
\end{equation}
Substituting this estimate into \cref{eq:P-upper-near-final} gives
\begin{equation}\label{eq:P-final}
  P
  \le
  \left(q^R R^R+o_{q,R}(1)\right)
  \frac{q^n}{n^R}.
\end{equation}
Finally, inserting \cref{eq:P-final} into \cref{eq:L-vs-P} yields
\[
  \cL_q(n,R)
  \le
  \left(2q^R R^{R+1}+o_{q,R}(1)\right)
  \frac{q^n}{n^R},
\]
which is precisely the bound asserted in the Main Theorem. The equivalent volume formulation then follows from the fixed-radius ball-volume asymptotic
\cref{eq:ball-asymptotic}.
\end{proof}

\section*{Acknowledgements}
The work of Hoang Ta was funded by the Ministry of Education and Training of Vietnam under project code CT2025.EA.BKA.08.

\bibliographystyle{unsrt}
\bibliography{references}

\end{document}